\journal{Journal of \LaTeX\ Templates}
\journal{Journal of Nonlinear Science}
\definecolor{backgrey}{rgb}{0.86,0.86,0.86}
\definecolor{dblue}{rgb}{0,0.0,0.5}
\definecolor{dred}{rgb}{0.4,0.2,0}
\definecolor{dgreen}{rgb}{0.0,0.5,0}
\newcommand{\captionfonts}{\small}
\long\def\@makecaption#1#2{%
  \vskip\abovecaptionskip
  \sbox\@tempboxa{{\captionfonts #1: #2}}%
  \ifdim \wd\@tempboxa >\hsize
    {\captionfonts #1: #2\par}
  \else
    \hbox to\hsize{\hfil\box\@tempboxa\hfil}%
  \fi
  \vskip\belowcaptionskip}
\newtheorem{theorem}{Theorem}
\newtheorem{remark}[theorem]{Remark}
\newtheorem{definition}[theorem]{Definition}
\newtheorem{proposition}[theorem]{Proposition}
\newenvironment{proof}[1][Proof]{\textbf{#1.} }{\ \hspace*{\fill} \rule{0.5em}{0.5em}}
\newcommand{\K}{\bf K}
\newcommand{\G}{\bf G}
\newcommand{\A}{\bf A}
\begin{document}

\begin{frontmatter}

\title{Online Real-time Learning of Dynamical Systems from Noisy Streaming Data: A Koopman Operator Approach}

\author{S. Sinha*}\cortext[mycorrespondingauthor]{Corresponding author}
\ead{subhrajit.sinha@pnnl.gov}
%
%
\author{S.~P. Nandanoori}
\ead{saipushpak.n@pnnl.gov}
\author{D.~A. Barajas-Solano}
\ead{David.Barajas-Solano@pnnl.gov}

\fntext[fn1]{The authors are with the Pacific Northwest National Laboratory,
Richland, WA, 99354}

\begin{abstract}
Recent advancements in sensing and communication facilitate obtaining high-frequency real-time data from various physical systems like power networks, climate systems, biological networks, etc. However, since the data are recorded by physical sensors, it is natural that the obtained data is corrupted by measurement noise. In this paper, we present a novel algorithm for online real-time learning of dynamical systems from noisy time-series data, which employs the Robust Koopman operator framework to mitigate the effect of measurement noise.
The proposed algorithm has three main advantages: a) it allows for online real-time monitoring of a dynamical system; b) it obtains a linear representation of the underlying dynamical system, thus enabling the user to use linear systems theory for analysis and control of the system; c) it is computationally fast and less intensive than the popular Extended Dynamic Mode
Decomposition (EDMD) algorithm.
We illustrate the efficiency of the proposed algorithm by applying it to identify the Van der Pol oscillator, the IEEE 68 bus system, and a ring network of Van der Pol oscillators.

\end{abstract}

\begin{keyword}
 
\end{keyword}

\end{frontmatter}

\nolinenumbers

\section{Introduction}

The field of dynamical systems was born with the works of Sir Issac Newton \cite{principia_newton}. Since its humble beginnings, it has developed into a matured branch of pure mathematics with applications to most branches of science and engineering.
For almost three hundred years, the approach to studying dynamical systems was model-based, in which the system under study is modeled from first principles.
However, this approach has severe limitations when studying systems like power networks, biological networks, stock markets, etc., which are inherently highly nonlinear and typically large-scale. As such it is almost impossible to model these dynamical systems from first principles. The way to circumvent this problem is to use data-driven techniques, in which the governing equations of motion are learned directly from time-series data of the states of the system.

Of the various data-driven learning methods, in recent years, the Koopman operator approach~\cite{Lasota,mezic_koopmanism,EDMD_williams,mezic_koopman_stability,sinha_robust_dmd_journal,sinha_equivariant_ifac,nandanoori2020data,nandanoori2021data} has been used extensively for the data-driven learning of dynamical systems.
Compared to machine learning (ML) methods, the Koopman operator framework has multiple advantages, with the major factor being the fact that the amount of training data required by Koopman operator approaches is far less compared to ML methods.
Furthermore, the Koopman operator is a linear operator that generates an equivalent linear representation of the nonlinear system, which allows us to use concepts from linear systems theory for the analysis and control design for nonlinear systems~\cite{sinha_data_driven_control_arxiv,huang2018feedback}. 

Given a dynamical system, instead of looking at the evolution of trajectories, the Koopman operator governs the evolution of functions on the state space under the dynamical map. The main advantage of this framework, as mentioned earlier, is the fact that the Koopman operator is a linear operator, but the trade-off is that it is an infinite-dimensional operator.
Hence, given any general nonlinear system, the corresponding Koopman operator generates an infinite-dimensional linear system, which is an exact representation of the nonlinear system and, unlike linearization, is valid globally on the state space. Furthermore, the Koopman framework facilitates data-driven learning of dynamical systems \cite{sinha_sparse_koopman_acc,nandanoori2020data} and has been extensively used for data-driven learning in synthetic biology \cite{sootla2017pulse,harrison2021stability}, building systems \cite{eisenhower2010decomposing,korda_mezic_predictor}, climate systems \cite{slawinska2019quantum}, robotics \cite{bruder2019modeling,abraham2019active}, neuroscience \cite{marrouch2020data}, and power systems \cite{susuki2016applied,sinha2019information,nandanoori2022graph}, where a finite-dimensional approximation of the Koopman operator is computed from time-series data of the dynamical states.

However, when it comes to using data-driven learning of real physical systems, it is often a requirement that the learning framework learns the dynamics in an online fashion and in real-time.
This is especially relevant for power networks, stock markets, building systems etc., which demand constant monitoring to ensure efficient and reliable operation.
For example, a power network is exposed to multiple adversarial events like sudden load changes, faults and attacks~\cite{kropp2006system} and as such, it is imperative to monitor the power network constantly to identify any adversarial event as soon as possible and to take appropriate measures to ensure safe operation.
Therefore, there is a need for a framework that can learn the underlying dynamics online and in real time~\cite{sinha_online_learning_PES}.
Moreover, since the data is obtained from physical sensors, it is often the case that the data obtained is corrupted by noise.

In this paper, we aim to address the problem of data-driven learning from noisy data in an online fashion and in real-time.
In particular, we use the Robust Koopman operator framework~\cite{robust_DMD_ACC,sinha_robust_dmd_journal} to mitigate the effect of measurement noise.
Furthermore, we propose a recursive algorithm to learn the Koopman representation of the underlying system, so that the Koopman operator gets updated as each new data point streams in.

This manuscript is organized as follows: In section~\ref{sec_transfer_operator} we discuss the basics of transfer operators for dynamical systems, followed by the formulation of the Robust Koopman operator computation algorithm in section~\ref{sec_robust_koopman}.
The main results of the paper are presented in section~\ref{sec_online_learning}, where we derive and describe the online learning algorithm.
In section~\ref{sec_simulation} we illustrate the efficiency of the proposed algorithm by applying it to three different dynamical systems, namely, a Van der Pol oscillator, the IEEE 68 bus system and a ring network of Van der Pol oscillators. Finally, we conclude the paper in section~\ref{sec_conclusions}.

\section{Transfer Operators}\label{sec_transfer_operator}

Consider a discrete-time dynamical system
\begin{eqnarray}\label{system}
x_{t+1} = T(x_t)
\end{eqnarray}
where $T:X\subset \mathbb{R}^N \to Z$ is assumed to be at least of class ${\cal C}^1$.  Associated with the dynamical system (\ref{system}) is the Borel-$\sigma$ algebra ${\cal B}(X)$ on $X$ and the vector space ${\cal M}(X)$ of bounded complex-valued measures on $X$. With this, two linear operators, namely, the Perron-Frobenius (P-F) and the Koopman operator, can be defined as follows~\cite{Lasota}:
\begin{definition}[Perron-Frobenius Operator] 
The Perron-Frobenius operator $\mathbb{P}:{\cal M}(X)\to {\cal M}(X)$ is given by
\[[\mathbb{P}\mu](A)=\int_{{X} }\delta_{T(x)}(A)d\mu(x)=\mu(T^{-1}(A)),\]
where $\delta_{T(x)}(A)$ is the stochastic transition function that measures the probability that point $x$ will reach the set $A$ in one-time step under the system mapping $T$. 
\end{definition}

\begin{definition}[Invariant measures] Invariant measures are the fixed points of
the P-F operator $\mathbb{P}$ that are also probability measures. Let $\bar \mu$ be the invariant measure then, $\bar \mu$ satisfies
\[\mathbb{P}\bar \mu=\bar \mu.\]
\end{definition}

If the state space $Z$ is compact, it is known that the P-F operator admits at least one invariant measure.

\begin{definition} [Koopman Operator] 
Given any $h\in\cal{F}$, $\mathbb{U}:{\cal F}\to {\cal F}$ is defined by
\[[\mathbb{U} h](x)=h(T(x))\]
where $\cal F$ is the space of function (observables) invariant under the action of the Koopman operator.
\end{definition}

\begin{figure}[htp!]
\centering
\includegraphics[scale=.45]{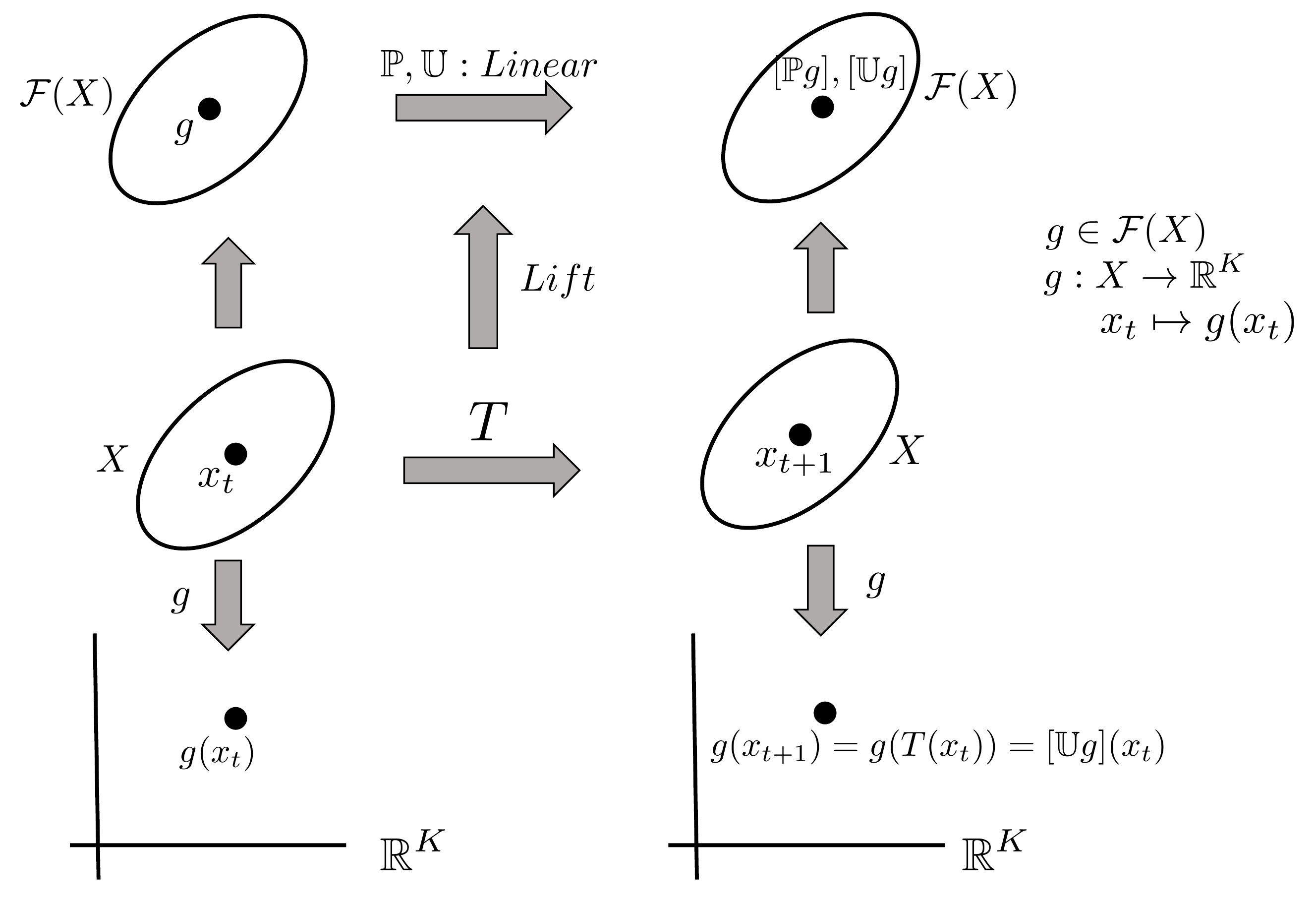}
\caption{Perron-Frobenius and Koopman operators corresponding to a dynamical system.}\label{koopman_diagram}
\end{figure}
Both the P-F and the Koopman operators are linear operators even if the underlying system is nonlinear and while the analysis is made tractable by linearity, the tradeoff is that these operators are typically infinite-dimensional.
In particular, the P-F  and Koopman operators lift a dynamical system from a finite-dimensional space to generate an infinite-dimensional linear system on the space of distributions and functions respectively.

\section{Robust Koopman Operator}\label{sec_robust_koopman}

The Koopman operator is infinite-dimensional, but for computational purposes, one has to compute a finite-dimensional approximation of the Koopman operator from time-series data. The most commonly used approach for computing the finite-dimensional approximation is known as Extended Dynamic Mode Decomposition (EDMD) \cite{EDMD_williams}.

\subsection{Extended Dynamic Mode Decomposition}
Let 
\[[x_1,x_2, \cdots , x_M]\in\mathbb{R}^{N\times M}\] 
be $M$ data points from a $N$-dimensional system $x_t\mapsto T(x_t)$, which evolves on $\mathbb{R}^N$.
Let 
\[{\bf \Psi}(x)=[\psi_1(x), \psi_2(x), \cdots , \psi_K(x)]:\mathbb{R}^N\to \mathbb{R}^K\]
be the set of vector-valued Koopman observables such that each $\psi_i:\mathbb{R}^N\to \mathbb{R}$ is square-integrable.
Let 
\begin{eqnarray}\label{G_and_A_formula}
{\bf G} = \frac{1}{M}\sum_{i=1}^{M-1}{\bf \Psi}(x_i)^\top{\bf \Psi}(x_i);
\quad {\bf A} = \frac{1}{M}\sum_{i=1}^{M-1}{\bf \Psi}(x_i)^\top{\bf \Psi}(x_{i+1}).
\end{eqnarray}

Then the finite-dimensional approximation ${\bf K}\in\mathbb{R}^{K\times K}$ is obtained as the solution of the optimization problem
\begin{eqnarray}\label{EDMD_opt}
\min_{\bf K}\parallel {\bf G}{\bf K}-{\bf A}\parallel,
\end{eqnarray}
where $\bf G$ and $\bf A$ are given by (\ref{G_and_A_formula}).

\subsection{Robust Koopman Operator}
The standard approach to compute the Koopman operator from time-series data is by solving the optimization problem (\ref{EDMD_opt}). However, in presence of noise in the data, (\ref{EDMD_opt}) leads to erroneous Koopman operators and one has to explicitly take into account the noise in the data, while computing the Koopman operator. We assume that the measurements are of the form
\begin{eqnarray}
x_t^m = x_t + \delta x_t
\end{eqnarray}
where $x_t^m$ is the measured value of the states at time $t$, $x_t$ is the true value of the states at time $t$ and $\delta x_t$ is the noise in the measurement at time $t$. We further assume $\parallel\delta x_i\parallel \leq \Lambda$ for some $\Lambda>0$, that is $\delta x_i\in\Delta$, where $\Delta$ is a compact subset of $\mathbf{R}^N$. The uncertainty in the data acts as an adversary which tries to maximize the residual error of the optimization problem which computes the Koopman operator, and hence to obtain the Koopman operator $\bf K$ for uncertain data, a robust optimization problem is formulated as the following $\min-\max$ optimization problem \cite{robust_DMD_ACC,sinha_robust_dmd_journal}. 

\begin{equation}\label{edmd_robust}
\min\limits_{\bf K}\max_{\delta x_1,\cdots , \delta x_M\in \Delta}\parallel {\bf G}_\delta{\bf K}-{\bf A}_\delta\parallel_F=:\min\limits_{\bf K}\max_{\delta\in \Delta} {\cal J}({\bf K}, {\bf G}_\delta,{\bf A}_\delta)
\end{equation}
where
\begin{eqnarray}
&&{\bf G}_\delta=\frac{1}{M}\sum_{i=1}^{M-1} \boldsymbol{\Psi}({x}_i)^\top \boldsymbol{\Psi}({x}_i+\delta x_i)\nonumber\\
&&{\bf A}_\delta=\frac{1}{M}\sum_{i=1}^{M-1} \boldsymbol{\Psi}({x}_i)^\top \boldsymbol{\Psi}({x}_{i+1}+\delta x_{i+1}),
\end{eqnarray}
with ${\bf K},{\bf G}_\delta,{\bf A}_\delta\in\mathbb{C}^{K\times K}$ and $\Delta$ is the bounded uncertainty set.

The robust optimization problem (\ref{edmd_robust}), is in general non-convex because the cost $\cal J$ may not be a convex function of $\delta$. 

\begin{proposition}
The optimization problem (\ref{edmd_robust}) can be approximated as
\begin{equation}\label{edmd_robust_convex}
\min\limits_{\bf K}\max_{\delta{\bf G},\delta{\bf A}\in  {\cal U}}\parallel ({\bf G}+\delta {\bf G}){\bf K}-({\bf A}+\delta {\bf A})\parallel_F
\end{equation}
where $\cal U$ is a compact set in $\mathbb{R}^{K\times K}$.
\end{proposition}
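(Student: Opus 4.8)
The plan is to push the state-space uncertainty $\delta x_i$ through the dictionary $\boldsymbol{\Psi}$ and show that, to leading order, it induces an additive and \emph{bounded} perturbation of the nominal data matrices ${\bf G}$ and ${\bf A}$. First I would use the smoothness of the observables together with the a priori bound $\|\delta x_i\|\le\Lambda$ to Taylor-expand the dictionary about the true trajectory,
\[
\boldsymbol{\Psi}(x_i+\delta x_i)=\boldsymbol{\Psi}(x_i)+\delta x_i^\top D\boldsymbol{\Psi}(x_i)+r_i,\qquad \|r_i\|=O(\Lambda^2),
\]
and substitute this into the definitions of ${\bf G}_\delta$ and ${\bf A}_\delta$. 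Collecting the zeroth-order terms reproduces ${\bf G}$ and ${\bf A}$ of (\ref{G_and_A_formula}); the first-order terms give matrices
\[
\delta{\bf G}=\frac1M\sum_{i=1}^{M-1}\boldsymbol{\Psi}(x_i)^\top\delta x_i^\top D\boldsymbol{\Psi}(x_i),\qquad
\delta{\bf A}=\frac1M\sum_{i=1}^{M-1}\boldsymbol{\Psi}(x_i)^\top\delta x_{i+1}^\top D\boldsymbol{\Psi}(x_{i+1}),
\]
which depend \emph{linearly} on $\delta=(\delta x_1,\dots,\delta x_M)$, and everything else is $O(\Lambda^2)$. Hence ${\bf G}_\delta={\bf G}+\delta{\bf G}+O(\Lambda^2)$ and ${\bf A}_\delta={\bf A}+\delta{\bf A}+O(\Lambda^2)$.

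Next I would define the uncertainty set ${\cal U}$ as the image of $\Delta\times\cdots\times\Delta$ (which is compact, since $\Delta$ is) under the continuous linear map $\delta\mapsto(\delta{\bf G},\delta{\bf A})$. A continuous image of a compact set is compact, so ${\cal U}$ is compact, as claimed; if a simpler description is wanted, one can over-approximate ${\cal U}$ by a Frobenius-norm ball of radius $c\,\Lambda$, with $c$ depending only on $\sup_i\|\boldsymbol{\Psi}(x_i)\|$ and $\sup_i\|D\boldsymbol{\Psi}(x_i)\|$ along the recorded orbit, which is again compact and contains the exact image. Discarding the $O(\Lambda^2)$ remainders in the cost ${\cal J}({\bf K},{\bf G}_\delta,{\bf A}_\delta)$ then converts the inner maximization over $\delta\in\Delta$ into a maximization over $(\delta{\bf G},\delta{\bf A})\in{\cal U}$, which is precisely (\ref{edmd_robust_convex}).

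It then remains to control the approximation error. By the triangle inequality,
\[
\Big|\,\|{\bf G}_\delta{\bf K}-{\bf A}_\delta\|_F-\|({\bf G}+\delta{\bf G}){\bf K}-({\bf A}+\delta{\bf A})\|_F\,\Big|\le O(\Lambda^2)\,(1+\|{\bf K}\|_F),
\]
so on any bounded set of candidate operators ${\bf K}$ the two cost functions — and hence the two $\min$–$\max$ values — agree up to $O(\Lambda^2)$. I expect the main obstacle to be exactly this error bookkeeping: one must justify that the dictionary is $C^2$ (or merely $C^1$, which weakens the bound to $o(\Lambda)$) on a neighborhood of the recorded data, and that the worst-case ${\bf K}$ stays in a bounded region, so that the quadratic-and-higher terms are genuinely negligible relative to the $O(\Lambda)$ uncertainty being modeled. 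A secondary point is that the ${\cal U}$ built above is a specific, generally non-convex compact set; replacing it by its convex hull (equivalently, by the norm-ball over-approximation) is what finally renders (\ref{edmd_robust_convex}) a tractable convex robust least-squares problem, and this relaxation only enlarges the feasible region, yielding a conservative but well-posed surrogate for (\ref{edmd_robust}).
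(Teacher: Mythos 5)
Your proof follows essentially the same route as the paper's: a first-order Taylor expansion of $\boldsymbol{\Psi}$ about the recorded data, identification of the induced linear perturbations $\delta{\bf G}$ and $\delta{\bf A}$, and a boundedness/compactness argument for the resulting uncertainty set ${\cal U}$. If anything, your version is more careful than the paper's — you justify compactness as the continuous image of the compact set $\Delta\times\cdots\times\Delta$ rather than merely asserting it from a norm bound, and you explicitly track the $O(\Lambda^2)$ remainder in the cost — but the underlying argument is the same.
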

\begin{proof}
From Taylor series expansion we have, ${\bf \Psi}(x_i+\delta x_i) = {\bf \Psi}(x_i) +  {\bf \Psi}'(x_i) \delta x_i+h.o.t.$, where ${\bf \Psi}'(x_i)$ is the first derivative of ${\bf \Psi}(x)$ at $x_i$. Hence,
\begin{eqnarray*}
{\bf G}_{\delta} &\approx& {\bf G} + \frac{1}{M}\sum_{i=1}^{M-1}{\bf \Psi}^\top (z_i)\delta z_i {\bf \Psi}'(z_i)\\
&=& {\bf G} +\delta {\bf G}
\end{eqnarray*}
where $\delta {\bf G} = \frac{1}{M}\sum_{i=1}^{M}{\bf \Psi}^\top (x_i)\delta z_i {\bf \Psi}'(x_i)$.

Moreover,
\begin{eqnarray*}
&&\parallel \delta {\bf G} \parallel_F = \parallel \frac{1}{M}\sum_{i=1}^{M}{\bf \Psi}^\top (x_i)\delta x_i {\bf \Psi}'(x_i) \parallel_F\\
&\leq& \frac{1}{M}\sum_{i = 1}^{M}\parallel {\bf \Psi}^\top (x_i)\delta x_i {\bf \Psi}'(x_i)\parallel_F \\
&\leq&\frac{1}{M}\sum_{i=1}^{M}\parallel {\bf \Psi}^\top (x_i)\parallel_F \cdot \parallel\delta x_i \parallel_F \cdot \parallel {\bf \Psi}'(z_i)\parallel_F
\end{eqnarray*}
Hence, $\delta {\bf G}$ belongs to a compact set ${\cal U}_1$. Similarly, one can show ${\bf A}_\delta \approx {\bf A} + \delta {\bf A}$ and $\delta {\bf A}$ belongs to a compact set ${\cal U}_2$. Letting ${\cal U}={\cal U}_1\cup{\cal U}_2$, proves the proposition.
\end{proof}

With this, we have the following theorem, which derives a regularized least-squares optimization problem whose solution is the Robust Koopman Operator.

\begin{theorem}
The optimization problem 
\begin{equation}
\min\limits_{\bf K}\max_{\delta{\bf G},\delta{\bf A}\in  {\cal U}}\parallel ({\bf G}+\delta {\bf G}){\bf K}-({\bf A}+\delta {\bf A})\parallel_F
\end{equation}
is equivalent to the following optimization problem
\begin{eqnarray}\label{rob_eqv}
\min\limits_{\bf K}\parallel {\bf G}{\bf K}-{\bf A}\parallel_F+\lambda \parallel {\bf K}\parallel_F,
\end{eqnarray}
where $\lambda>0$ is a constant dependent on the bound of the noise.
\end{theorem}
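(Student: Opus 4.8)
The plan is to evaluate the inner maximisation in closed form for each fixed $\bf K$ and then to fold the resulting perturbation bounds into a single regularisation weight. Fix $\bf K$ and set $R:={\bf G}{\bf K}-{\bf A}$. Writing $({\bf G}+\delta{\bf G}){\bf K}-({\bf A}+\delta{\bf A})=R+\delta{\bf G}{\bf K}-\delta{\bf A}$ and applying the triangle inequality together with submultiplicativity of the Frobenius norm ($\|\delta{\bf G}{\bf K}\|_F\le\|\delta{\bf G}\|_F\|{\bf K}\|_F$) gives
\[
\|({\bf G}+\delta{\bf G}){\bf K}-({\bf A}+\delta{\bf A})\|_F\le \|{\bf G}{\bf K}-{\bf A}\|_F+\|\delta{\bf G}\|_F\,\|{\bf K}\|_F+\|\delta{\bf A}\|_F .
\]
Since $\cal U$ is the compact, norm-bounded uncertainty set constructed in the preceding proposition, there is a radius $\rho=\rho(\Lambda)>0$ — read off from the chain of bounds $\|\delta{\bf G}\|_F\le\frac1M\sum_i\|{\bf \Psi}(x_i)\|_F\|\delta x_i\|\|{\bf \Psi}'(x_i)\|_F$, and analogously for $\delta{\bf A}$ — with $\|\delta{\bf G}\|_F\le\rho$ and $\|\delta{\bf A}\|_F\le\rho$ for every admissible perturbation. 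Hence the inner maximum is at most $\|{\bf G}{\bf K}-{\bf A}\|_F+\rho\|{\bf K}\|_F+\rho$.

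Next I would show this upper bound is attained, so that the inner maximisation equals $\|{\bf G}{\bf K}-{\bf A}\|_F+\rho\|{\bf K}\|_F+\rho$. The tool is the standard worst-case alignment argument from robust least squares: if $R\neq 0$ and ${\bf K}\neq 0$, pick $\delta{\bf A}=-\rho\,R/\|R\|_F$, which saturates the $\|\delta{\bf A}\|_F$ contribution, and pick a rank-one $\delta{\bf G}$ whose row space is aligned with the leading right-singular direction of $\bf K$ and whose image lies along $R$, so that $\delta{\bf G}{\bf K}$ is a positive multiple of $R$ with Frobenius norm $\rho\|{\bf K}\|_F$; the degenerate cases $R=0$ or ${\bf K}=0$ are immediate. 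I expect this to be the main obstacle: in the genuinely matrix-valued setting the submultiplicative inequality $\|\delta{\bf G}{\bf K}\|_F\le\|\delta{\bf G}\|_F\|{\bf K}\|_F$ is tight only for $\bf K$ of a special form, so one must either invoke tightness only up to the higher-order terms already discarded in the preceding proposition, or reshape $\cal U$ slightly so that the maximiser of the right-hand side lies inside it. This is consistent with the ``can be approximated as'' qualifier inherited from that proposition.

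Finally, the additive term $\rho$ does not depend on $\bf K$, so it can be dropped without changing the minimiser: the problems
\[
\min_{\bf K}\max_{\delta{\bf G},\delta{\bf A}\in{\cal U}}\|({\bf G}+\delta{\bf G}){\bf K}-({\bf A}+\delta{\bf A})\|_F
\qquad\text{and}\qquad
\min_{\bf K}\Big(\|{\bf G}{\bf K}-{\bf A}\|_F+\rho\|{\bf K}\|_F\Big)
\]
have the same optimal set. Setting $\lambda:=\rho$, which by construction depends only on the noise bound $\Lambda$ (and on the fixed dictionary ${\bf \Psi}$ evaluated along the data), yields exactly $(\ref{rob_eqv})$; moreover $\lambda>0$ whenever the data are actually corrupted by noise, and the regularised objective is convex in $\bf K$, so computing the Robust Koopman operator reduces to a tractable convex program.
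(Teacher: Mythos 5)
Your overall architecture (a triangle-inequality upper bound on the worst-case residual, an alignment argument for attainment, then discarding a $\bf K$-independent term) mirrors the paper's, but you execute the inner maximisation at the matrix level with independent Frobenius balls on $\delta{\bf G}$ and $\delta{\bf A}$, whereas the paper first vectorises ${\bf K}\mapsto{\cal K}$ via Kronecker products and applies the classical robust least-squares (El Ghaoui--Lebret) worst-case-residual computation to the stacked perturbation $[\delta\hat G\ \ \delta\hat A]$ acting on an augmented vector, obtaining $r=\|{\bf GK}-{\bf A}\|_F+\lambda\sqrt{\|{\bf K}\|_F^2+K}$. Your final step --- dropping the additive constant $\rho$ --- is actually cleaner than the paper's corresponding step, which replaces $\sqrt{\|{\bf K}\|_F^2+K}$ by $\|{\bf K}\|_F$ on the grounds that ``$K$ is a constant,'' even though those two penalties do not share minimisers in general.

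The genuine gap is exactly the one you flag but do not close: the attainment step. For a matrix perturbation constrained by $\|\delta{\bf G}\|_F\le\rho$ one has
\[
\max_{\|\delta{\bf G}\|_F\le\rho}\|\delta{\bf G}\,{\bf K}\|_F=\rho\,\sigma_{\max}({\bf K}),
\]
because the linear map $X\mapsto X{\bf K}$ has operator norm $\sigma_{\max}({\bf K})$ with respect to the Frobenius norm (its matrix representation on vectorised arguments is ${\bf K}^\top\otimes I$). Since $\sigma_{\max}({\bf K})<\|{\bf K}\|_F$ whenever ${\bf K}$ has rank at least two, your claimed value $\|R\|_F+\rho\|{\bf K}\|_F+\rho$ for the inner maximum strictly exceeds the true supremum for generic $\bf K$; no rank-one (or any other) $\delta{\bf G}$ in the Frobenius ball realises it. Even the corrected value $\|R\|_F+\rho\,\sigma_{\max}({\bf K})+\rho$ is attained only when $\delta{\bf G}{\bf K}$ can be made positively proportional to $R$, which in general requires compatibility between $R$ and the leading singular subspace of ${\bf K}$. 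The way out is the paper's: after vectorisation the perturbation multiplies the fixed \emph{vector} built from ${\cal K}$, and for a vector $z$ the bound $\|\delta z\|\le\|\delta\|\,\|z\|$ is always tight at the rank-one choice $\delta=\lambda u z^\top/\|z\|$, so the alignment is exact. (The price, which the paper also pays silently, is that this maximiser is not of the structured form $\delta{\bf G}\otimes I_K$, i.e., the uncertainty set must be enlarged to an unstructured norm ball --- your remark about ``reshaping $\cal U$'' is pointing at the same issue.) As written, your equality for the inner maximum is false, so the reduction to the Frobenius-regularised problem does not go through without switching to the vectorised formulation or weakening the claim to an upper bound.
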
 
\begin{proof}
For a $K\times K$ matrix $M=[m_{i,j}]\in\mathbb{R}^{K\times K}$, let ${\cal M}$ denote the vector 
\begin{eqnarray*}
{\cal M}=[m_{1,1},\cdots ,m_{K,1},m_{1,2},\cdots ,m_{K,2},\cdots , m_{K,K}]^\top.
\end{eqnarray*}
This follows from the fact that $\mathbb{R}^{K\times K} \cong \mathbb{R}^{K^2}$. Hence, 
\[\parallel M\parallel_F = \parallel {\cal M}\parallel_2.\]
For two matrices $A$ and $B$, let $A \otimes B$ denote the Kronecker product of $A$ and $B$.
Let ${\cal K}$ be the vector form of $\bf K$ and let $\cal A$ and $\delta {\cal A}$ be the vector forms of $A$ and $\delta A$, respectively.

Then, the min-max optimization problem can be written as
{\small
\begin{eqnarray}\label{opt_2_norm}\nonumber
&&{\cal J} = \min_{\bf K}\max_{\delta{\bf G},\delta{\bf A} \in {\cal U}}\parallel ({\bf G}+\delta {\bf G}){\bf K}-({\bf A}+\delta {\bf A})\parallel_F\\ \nonumber
&=& \min_{\cal K}\max_{\delta{\bf G},\delta{\cal A}\in {\cal U}}\parallel [({\bf G}+\delta {\bf G})\otimes I_K]{\cal K} - ({\cal A}+\delta {\cal A})\parallel_F\\
&=& \min_{\cal K}\max_{\delta{\bf G},\delta{\cal A}\in {\cal U}}\parallel [({\bf G}+\delta {\bf G})\otimes I_K]{\cal K} - ({\cal A}+\delta {\cal A})\parallel_2
\end{eqnarray}
}
where $I_K$ is the $K\times K$ identity matrix.
Writing ${\bf G}\otimes I_K$ as $\hat{G}$ and $\delta {\bf G}\otimes I_K$ as $\delta\hat{G}$, the optimization problem (\ref{opt_2_norm}) can be written as

\begin{eqnarray}\label{opt_2_norm_modified}
{\cal J} = \min_{\cal K}\max_{\substack{\delta{\hat{G}}\in \Pi_K{\cal U}\\ \delta{\cal A}\in {\cal U}}}\parallel ({\hat{G}}+\delta {\hat{G}}){\cal K} - ({\cal A}+\delta {\cal A})\parallel_2,
\end{eqnarray}
where $\Pi_K{\cal U}$ is the projection of $\cal U$ on $\mathbb{R}^{K\times K}\otimes \mathbb{R}^{K\times K}$.

Fix ${\bf K}\in\mathbb{R}^{K\times K}$ and let
\begin{eqnarray}\label{worst_residual_geq}
r = \max_{\substack{\delta{\hat{G}}\in \Pi_K{\cal U}\\ \delta{\cal A}\in  {\cal U}}}\parallel ({\hat{G}}+\delta {\hat{G}}){\cal K} - ({\cal A}+\delta {\cal A})\parallel_2
\end{eqnarray}
be the worst-case residual. Then,
\begin{eqnarray}\label{worst_residual_leq}
\begin{aligned}
r &\leq \max_{\substack{\delta{\hat{G}}\in \Pi_K\bar{\Delta}\\ \delta{\cal A}\in  {\cal U}}}\parallel{\hat G}{\cal K} - {\cal A} \parallel_2 + \parallel\delta {\hat G}{\cal K} - \delta {\cal A} \parallel_2\leq \parallel{\hat G}{\cal K} - {\cal A} \parallel_2 + \lambda \parallel{\cal K} - \mathds{1} \parallel_2\\
&\leq \parallel{\hat G}{\cal K} - {\cal A} \parallel_2 + \lambda \sqrt{\parallel {\cal K}\parallel_2^2 + K}= \parallel {\bf G} {\bf K} - {\bf A} \parallel_F + \lambda \sqrt{\parallel {\bf K}\parallel_F^2 + K}
\end{aligned}
\end{eqnarray}

Again, choose $[\delta \hat{G} \quad \delta \hat{A}]$ as 
\[[\delta \hat{G} \quad \delta \hat{A}]= \frac{\lambda u}{\sqrt{\parallel {\cal K}\parallel_2^2 + K}}[{\cal K}^\top\quad K],\]
where 
\begin{equation}
  u =
    \begin{cases}
      \frac{\hat{G}{\cal K}-\hat{A}}{\parallel \hat{G}{\cal K}-\hat{A} \parallel}, \textnormal{ if } \hat{G}{\cal K}\neq\hat{A}\\
      \textnormal{any unit norm vector otherwise.}
    \end{cases}       
\end{equation}
%
%
Then,
\begin{eqnarray}\label{worst_residual_geq1}
\begin{aligned}
r &= \max_{\substack{\delta{\hat{G}}\in \Pi_K\bar{\Delta}\\ \delta\hat{A}\in  {\cal U}}}\parallel ({\hat{G}}{\cal K}-\hat{A}) + (\delta {\hat{G}}{\cal K}-\delta \hat{A})\parallel_F\\
& = \max_{\substack{\delta{\hat{G}}\in \Pi_K\bar{\Delta}\\ \delta\hat{A}\in  {\cal U}}} \left \| ({\hat{G}}{\cal K}-\hat{A}) + \lambda \left ( \frac{\hat{G}{\cal K}-\hat{A}}{\parallel \hat{G}{\cal K}-\hat{A} \parallel}{\cal K}^\top {\cal K}+K\frac{\hat{G}{\cal K}-\hat{A}}{\parallel \hat{G}{\cal K}-\hat{A} \parallel} \right ) \right \|_F\\
&\geq \parallel ({\hat{G}}{\cal K}-\hat{A})\parallel_F +\lambda \left \| \left ( \frac{\hat{G}{\cal K}-\hat{A}}{\parallel \hat{G}{\cal K}-\hat{A} \parallel}{\cal K}^\top {\cal K} +K\frac{\hat{G}{\cal K}-\hat{A}}{\parallel \hat{G}{\cal K}-\hat{A} \parallel} \right ) \right \|_F \\ 
&\geq \parallel ({\hat{G}}{\cal K}-\hat{A})\parallel_F + \lambda \sqrt{{\cal K}^\top {\cal K}+K}= \parallel {\bf G} {\bf K} - {\bf A} \parallel_F + \lambda \sqrt{\parallel {\bf K}\parallel_F^2 + K}
\end{aligned}
\end{eqnarray}

Hence, from (\ref{worst_residual_leq}) and (\ref{worst_residual_geq1}), the worst case residual is 
\begin{eqnarray}\label{worst_res_eqv}
r = \min_{{\bf K}}\parallel {\bf G}{\bf K} - {\bf A} \parallel_F + \lambda \sqrt{\parallel {\bf K}\parallel_F^2 +K}.
\end{eqnarray}
Since, $K$ is a constant, the $\bf K$ that minimizes $r$ in (\ref{worst_res_eqv}) is the same $\bf K$ that minimizes
\[\parallel {\bf G}{\bf K} - {\bf A} \parallel_F + \lambda \parallel {\bf K}\parallel_F .\]
\end{proof}

\section{Online Learning of Robust Koopman Operator}\label{sec_online_learning}

The Koopman operator is generally computed by solving the optimization problem (\ref{rob_eqv}) or directly using the formula
\begin{equation}\label{closed_form_noisy_koopman}
\K = (\G + \lambda I)^{-1} \A,
\end{equation}
where $I$ is the identity matrix of appropriate dimensions. Note that for computing the Koopman operator one needs to use the entire dataset and hence when a new data point streams in the Koopman operator needs to be recalculated from scratch using the new larger data set.
However, this requires the inversion of a matrix, which is computationally expensive, especially for data-sets obtained from large-dimensional systems. This necessitates developing a recursive algorithm for Robust Koopman operator computation.
In this section, we describe an algorithm that computes the Koopman operator recursively and thus reducing the computational cost.

Let
\begin{eqnarray}
X_M = [x_1,x_2,\ldots,x_M],& Y_M = [y_1,y_2,\ldots,y_M] \label{data_m}
\end{eqnarray}
be $M$ data points obtained from simulation of a dynamical system $x\mapsto T(x)$ or from an experiment, where $y_i=T(x_i)$. Let
\begin{eqnarray}\label{data_m_lifted}
\begin{aligned}
& {\G}_M={\bf \Psi}(X_M)^\top {\bf \Psi}(X_M)\\
& {\A}_M={\bf \Psi}(X_M)^\top {\bf \Psi}(Y_M)
\end{aligned}
\end{eqnarray}
be the data points in the lifted space $(\mathbb{R}^K)$, where the points $x_i$ and $y_i$ are mapped by the Koopman dictionary functions ${\bf \Psi}$.
Let
\begin{eqnarray}\label{Koopman_m_step}
{\bf K}_M = ({{\G}_{M}}+\lambda I)^{-1} {{\A}_{M}}
\end{eqnarray}
be the Koopman operator obtained by using the $M$ data points. Now, a new data point $(x_{M+1},y_{M+1})$ is acquired. The problem is to update the Koopman operator ${\K}_M$ to ${\bf K}_{M+1}$, without explicitly computing the inverse of $({\G}_{M+1}+\lambda I)$.

Note that (\ref{Koopman_m_step}) can be rewritten as
\begin{eqnarray}
{\A}_M = ({\G}_{M}+\lambda I){\K}_M  = \hat{\G}_M{\K}_M.
\end{eqnarray}
where 

\begin{eqnarray}
\begin{aligned}
\hat{\G}_M &= {\bf \Psi}(X_M)^\top {\bf \Psi}(X_M)+\lambda I\\
&=\sum_{i=1}^M{\bf \Psi}(X_i)^\top {\bf \Psi}(X_i) + \lambda I\\
{\A}_M &= {\bf \Psi}(X_M)^\top {\bf \Psi}(Y_M)=\sum_{i=1}^M{\bf \Psi}(X_i)^\top {\bf \Psi}(Y_i)
\end{aligned}
\end{eqnarray}
and ${\bf \Psi}(X_i)$ and ${\bf \Psi}(Y_i)$ are the $i$th columns of ${\bf \Psi}(X_M)$ and ${\bf \Psi}(Y_M)$ respectively.

Now, as the new data point $y_{M+1}=T(x_{M+1})$ streams in, the updated Koopman operator ${\K}_{M+1}$ satisfies
\begin{eqnarray}\label{Koopman_updated}
\hat{\G}_{M+1}{\K}_{M+1} = {\A}_{M+1},
\end{eqnarray}
where
\begin{eqnarray}
\begin{aligned}
& \hat{\G}_{M+1} = \sum_{i=1}^{M+1}{\bf \Psi}(X_i)^\top {\bf \Psi}(X_i) + \lambda I\\
& {\A}_{M+1}=\sum_{i=1}^{M+1}{\bf \Psi}(X_i)^\top {\bf \Psi}(Y_i).
\end{aligned}
\end{eqnarray}

Now,
\begin{align*}
\hat{\G}_{M+1} = & \sum_{i=1}^{M+1}{\bf \Psi}(X_i)^\top {\bf \Psi}(X_i) + \lambda I\\
=& \left(\sum_{i=1}^{M}{\bf \Psi}(X_i)^\top {\bf \Psi}(X_i)  + \lambda I\right) + {\bf \Psi}(X_{M+1})^\top {\bf \Psi}(X_{M+1})\\
=& \hat{\G}_{M} + {\bf \Psi}(X_{M+1})^\top {\bf \Psi}(X_{M+1}).
\end{align*}
Hence, using the Matrix Inversion Lemma, we have
\begin{eqnarray}\label{phi_iterate}
\hat{\G}_{M+1}^{-1} = \hat{\G}_M^{-1} - \frac{\hat{\G}_M^{-1}{\bf \Psi}(X_{M+1})^\top {\bf \Psi}(X_{M+1})\hat{\G}_M^{-1}}{1 + {\bf \Psi}(X_{M+1}) \hat{\G}_M^{-1} {\bf \Psi}(X_{M+1})^\top}.
\end{eqnarray}
Moreover,
\begin{eqnarray}\label{zm_iterate}\nonumber
{\A}_{M+1} &=& \sum_{i=1}^{M+1}{\bf \Psi}(X_i)^\top {\bf \Psi}(Y_i)\\
&=& {\A}_M + {\bf \Psi}(X_{M+1})^\top {\bf \Psi}(Y_{M+1}).
\end{eqnarray}
Hence, from (\ref{Koopman_updated}),

\begin{eqnarray}\label{Koopman_new}\nonumber
{\K}_{M+1} &=& \hat{\G}_{M+1}^{-1}{\A}_{M+1}\\ 
&=& \left(\hat{\G}_M^{-1} - \frac{\hat{\G}_M^{-1}{\bf \Psi}(X_{M+1})^\top {\bf \Psi}(X_{M+1})\hat{\G}_M^{-1}}{1 + {\bf \Psi}(X_{M+1}) \hat{\G}_M^{-1} {\bf \Psi}(X_{M+1})^\top}\right)\\
&\times & \left({\A}_M + {\bf \Psi}(X_{M+1})^\top {\bf \Psi}(Y_{M+1})\right).
\end{eqnarray}



Equation (\ref{Koopman_new}) gives the formula for updating the Koopman operator as new data streams in, without explicitly computing the inverse at every step, thus reducing the computational cost and hence improving efficiency.

\subsection{Initialization of the Algorithm}

Equation (\ref{Koopman_new}) gives the updated Koopman ${\K}_{M+1}$ operator in terms of quantities computed from the previous time step.
Note that the updated Koopman operator ${\K}_{M+1}$ depends on an inverse, namely, $\hat{\G}_M^{-1}$.
Hence, for computing the Koopman operator ${\K}_1$, one needs to initialize both $\hat{\G}_0$ and ${\A}_0$.
One potential way out of this situation is to compute the Koopman operator ${\K}_q$ using the initial $q$ data points $(x_i,y_i)$, $i=1,2,\cdots ,q$, $q<M$ as
\[{\K}_q = \hat{\G}_q^{-1}{\A}_q\]
and use the corresponding $\hat{\G}_q$ and ${\A}_q$ to compute the updated Koopman operators ${\K}_n$, $n>q$.

For practical applications, we set 
\[\hat{\G}_0 = \lambda I_K, \quad {\A}_0 = 0_K,\]
where $\lambda >0$, $I_K$ is the $K\times K$ identity matrix, and $0_K$ is the $K\times K$ zero matrix.

\begin{remark}
Choosing the initialization parameter $\delta$ can be tricky and usually one should run the algorithm multiple times, with different $\delta$ on a given training dataset and choose the one which has the lowest error for some validation dataset. 
\end{remark}

\begin{algorithm}[htp!]
\caption{Algorithm for online Koopman Operator computation using streaming data.}
\begin{enumerate}
\item{Fix the dictionary functions ${\bf \Psi} = [\psi_1, \cdots , \psi_K]$.}
\item{Initialize $\hat{\G}_0 = \lambda I_K$ and $ {\A}_0 = 0_K$.}
\item{As a new data point $(x_{M+1},y_{M+1})$ streams in, lift the data point to $\mathbb{R}^K$ using the dictionary function $\bf \Psi$.}
\item{Update ${\A}_{M}$ and $\hat{\G}_{M}^{-1}$ as
{\small
\begin{eqnarray*}
{\A}_{M+1} &=& {\A}_M + {\bf \Psi}(X_{M+1})^\top {\bf \Psi}(Y_{M+1})\\
\hat{\G}_{M+1}^{-1} &=& \hat{\G}_M^{-1} - \frac{\hat{\G}_M^{-1}{\bf \Psi}(X_{M+1})^\top {\bf \Psi}(X_{M+1})\hat{\G}_M^{-1}}{1 + {\bf \Psi}(X_{M+1}) \hat{\G}_M^{-1} {\bf \Psi}(X_{M+1})^\top}.
\end{eqnarray*}
}}
\item{Update the Koopman operator ${\K}_{M}$ to ${\K}_{M+1}$ as 
\begin{eqnarray*}
{\K}_{M+1} = \hat{\G}_{M+1}^{-1}{\A}_{M+1}.
\end{eqnarray*}
}
\end{enumerate}
\label{algo}
\end{algorithm}






\section{Simulation Results}\label{sec_simulation}
In this section, we demonstrate and discuss the properties of the proposed framework for iterative learning of dynamical models from noisy time-series data.
%
\subsection{Van der Pol Oscillator} 
Consider a noisy Van der Pol oscillator, whose equation of motion is given by 
\[\ddot{x}_t=\mu (1-x_t^2) - x_t + \sigma \xi_t,\]
where $x\in\mathbb{R}$ is the position variable, $\mu\geq 0$ is the damping co-efficient, $\sigma>0$ is a constant, and $\xi_t\in\mathbb{R}^2$ is independent and identically distributed Gaussian noise of zero mean and unit variance.
For simulation purposes, we chose $\mu = 0.8$ and $\sigma = 0.2$ and data were sampled at an interval of $0.01$ seconds. The phase portrait of the noisy Van der Pol oscillator is shown in Fig. \ref{fig_noisy_phase_van_der_pol}. 
\begin{figure}[htp!]
\centering
\includegraphics[scale=.4]{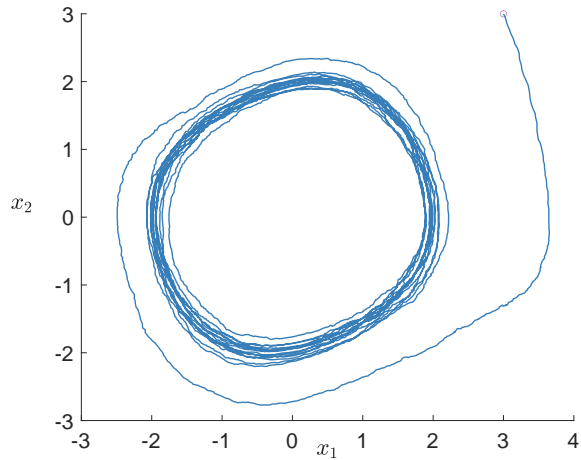}
\caption{Phase portrait of noisy Van der Pol oscillator.}\label{fig_noisy_phase_van_der_pol}
\end{figure}

We considered a single trajectory data, from a random initial condition, as the training data and used 40 radial Gaussian basis functions as the Koopman observables.
Fig.~\ref{fig_van_der_pol_attractor} shows that the RR-EDMD (Recursive Robust Extended Dynamic Mode Decomposition)  algorithm recovers the stable limit cycle of the Van der Pol oscillator as new data-points stream in and the Koopman operator gets updated.
In particular, Fig.~\ref{fig_van_der_pol_attractor}(a) shows that the eigenvector corresponding to the unit eigenvalue of the Koopman operator obtained after 500 data points have streamed in has partially identified the stable limit cycle.
However, as more data points stream in, the identified region of the limit cycle grows and in Fig.~\ref{fig_van_der_pol_attractor}(b) we find that with 2000 data points, the complete limit cycle has been identified.
\begin{figure}[htp!]
\centering
\subfigure[]{\includegraphics[scale=.4]{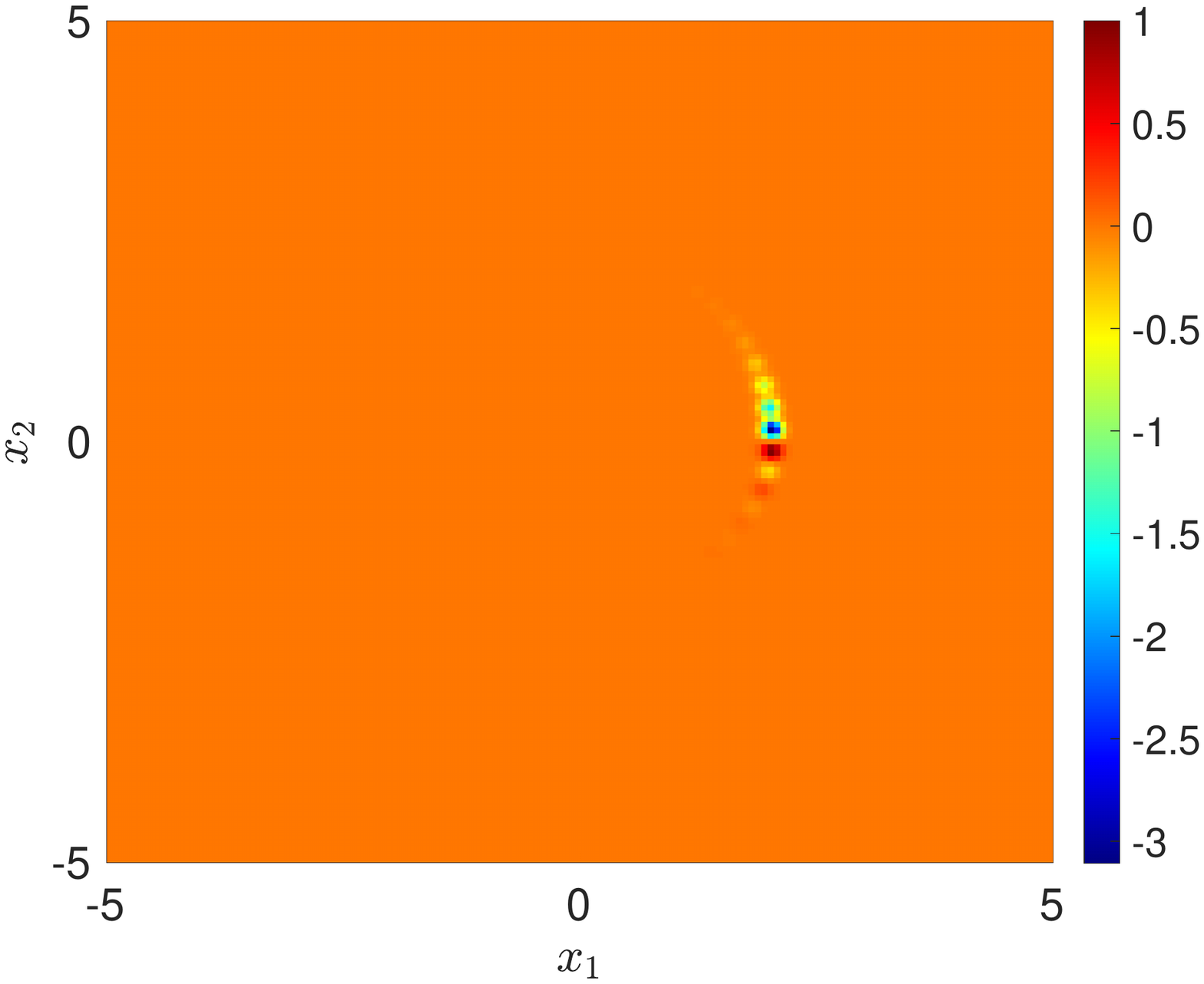}}
\subfigure[]{\includegraphics[scale=.4]{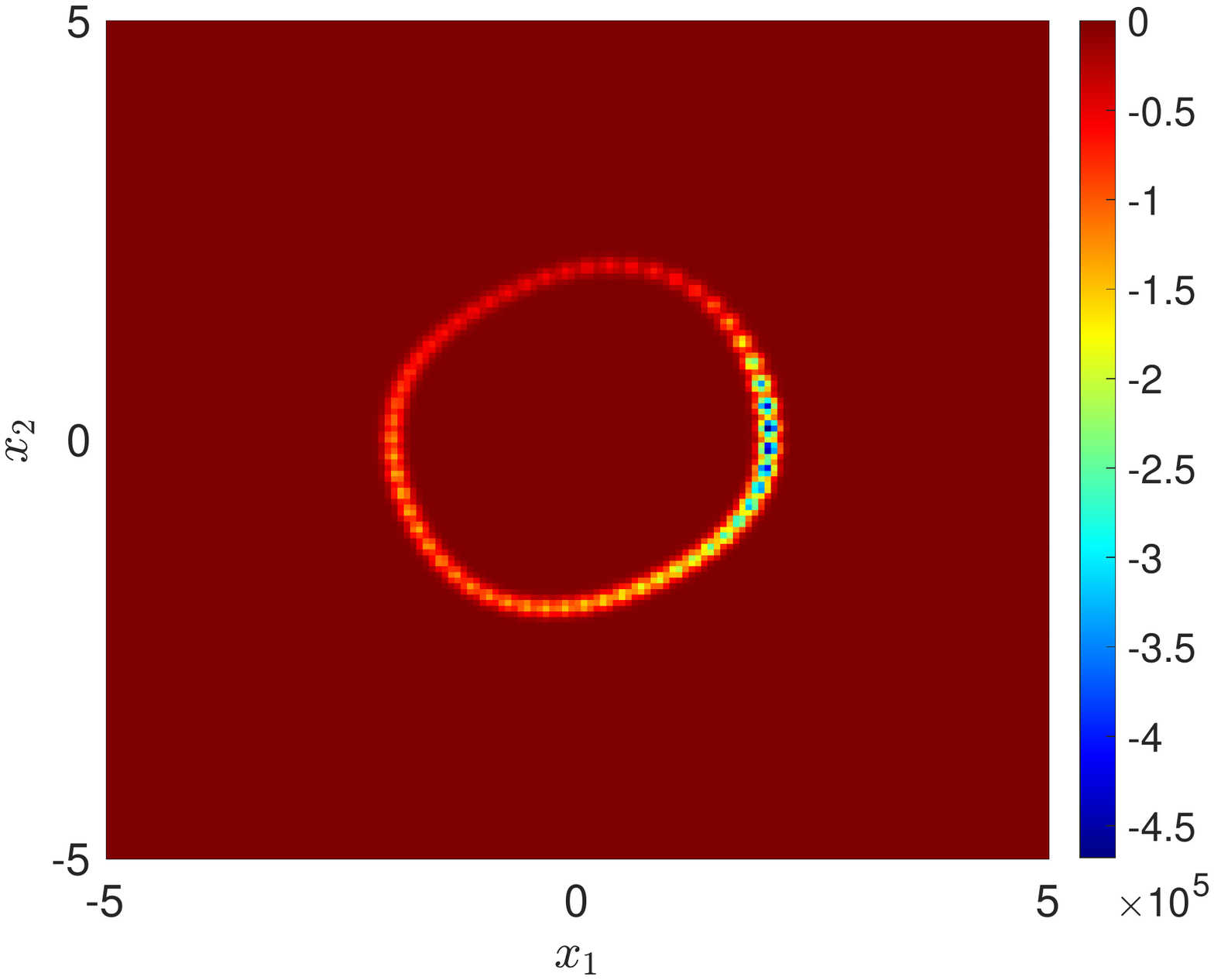}}
\caption{(a) The Koopman spectrum, computed from 500-time points data identifies the stable limit cycle of the Van der Pol oscillator partially. (b) After 2000 iterations, the Koopman spectra recover the entire stable limit cycle.}\label{fig_van_der_pol_attractor}
\end{figure}

However, the biggest advantage of the RR-EDMD formulation is the fact that it is much faster compared to EDMD.
To compare the computational cost of RR-EDMD and EDMD, we present in Fig.~\ref{fig_EDMD_REDMD_comp_time_comparison_van_der_pol} the computation times of both the algorithms for learning the Koopman operator as new data-points stream in.
It can be seen the computation time of the RR-EDMD algorithm varies linearly with the number of iterates, whereas for EDMD it varies almost quadratically.
This establishes a clear advantage of the RR-EDMD algorithm over EDMD and it is this fact that facilitates the use of RR-EDMD framework for online real-time learning of dynamical systems from streaming data.
\begin{figure}[htp!]
\centering
\includegraphics[scale=.4]{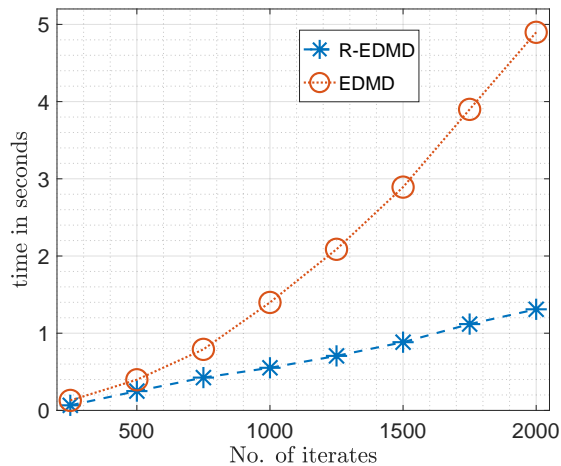}
\caption{Comparison of computation time for recursive learning using normal EDMD and RR-EDMD.}\label{fig_EDMD_REDMD_comp_time_comparison_van_der_pol}
\end{figure}
\subsection{IEEE 68 Bus System}
We now discuss the application of the proposed RR-EDMD algorithm to a power system.
We consider synthetic time-series measurements corresponding to the 68-bus power network with 16 generators shown in Fig.~\ref{fig:68bus}.
The synthetic measurements are generated using GridSTAGE \cite{nandanoori2020model}, a power system toolbox-based simulation platform that emulates PMU measurements corresponding to load changes or faults or adversarial actions.
\begin{figure}[h!]
    \centering
    \includegraphics[scale=.7]{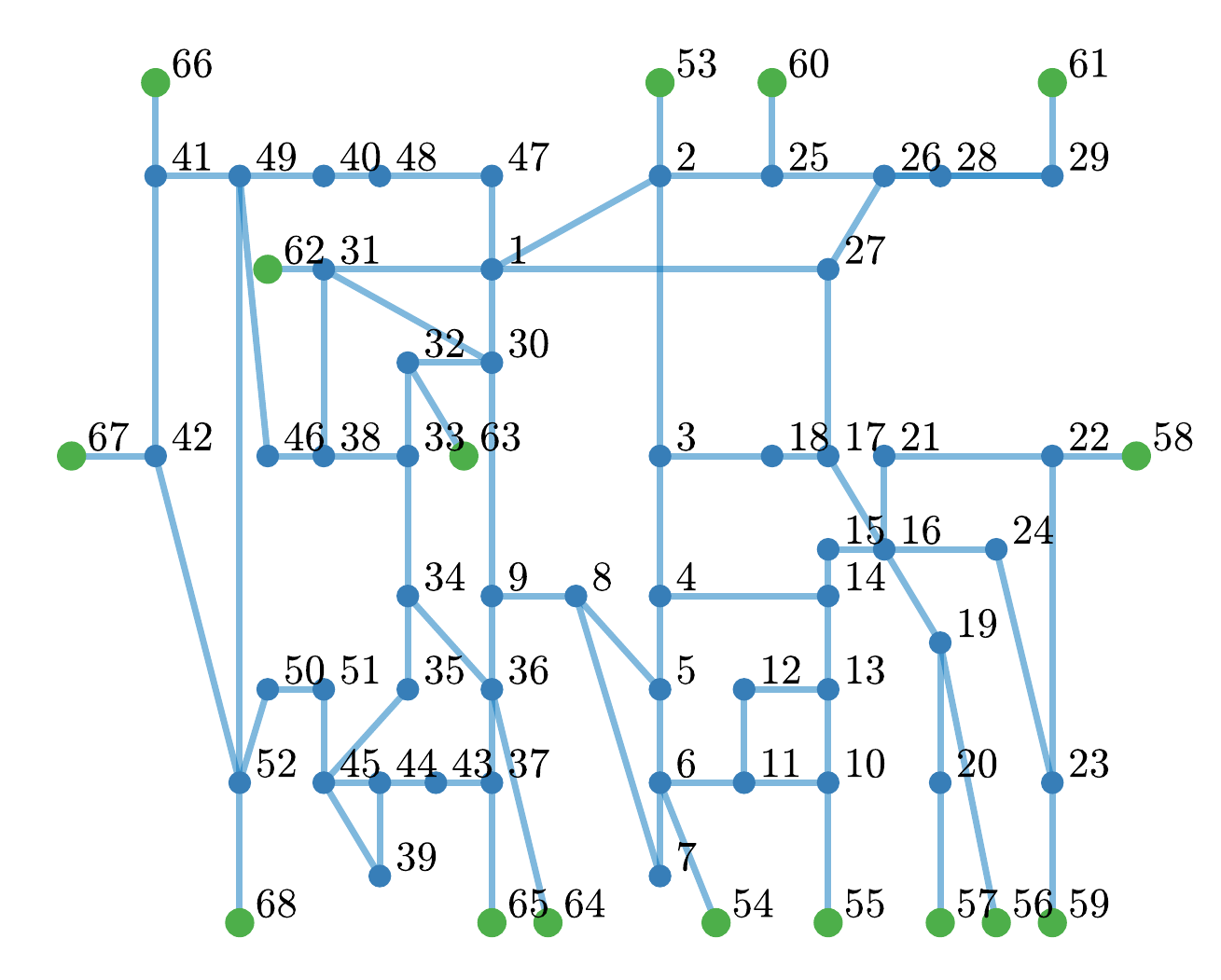}
    \caption{One line diagram representation of IEEE 68 bus network.}
    \label{fig:68bus}
\end{figure}

For the real-time learning of power system dynamics, random and large load changes are created using GridSTAGE.
Although the synthetic time-series measurements obtained through GridSTAGE are free of noise, the practical PMU measurements will not be due to the effect of communication uncertainties.
Hence, random noise is artificially added to the synthetic PMU measurements to mimic practical use cases and ensure the proper applicability of the proposed real-time learning framework.
The noise-corrupted PMU measurements are shown in Fig.~\ref{fig_power_data}. Since the operating point of the underlying power network keeps changing due to random load changes or faults or due to the change in the operating conditions, the dynamics learnt from PMU data will not be valid for all operating conditions and need to be updated.
Therefore, the proposed recursive learning method is particularly useful here for continuously learning the system dynamics from PMU data. 
\begin{figure}
\centering
\includegraphics[scale=.45]{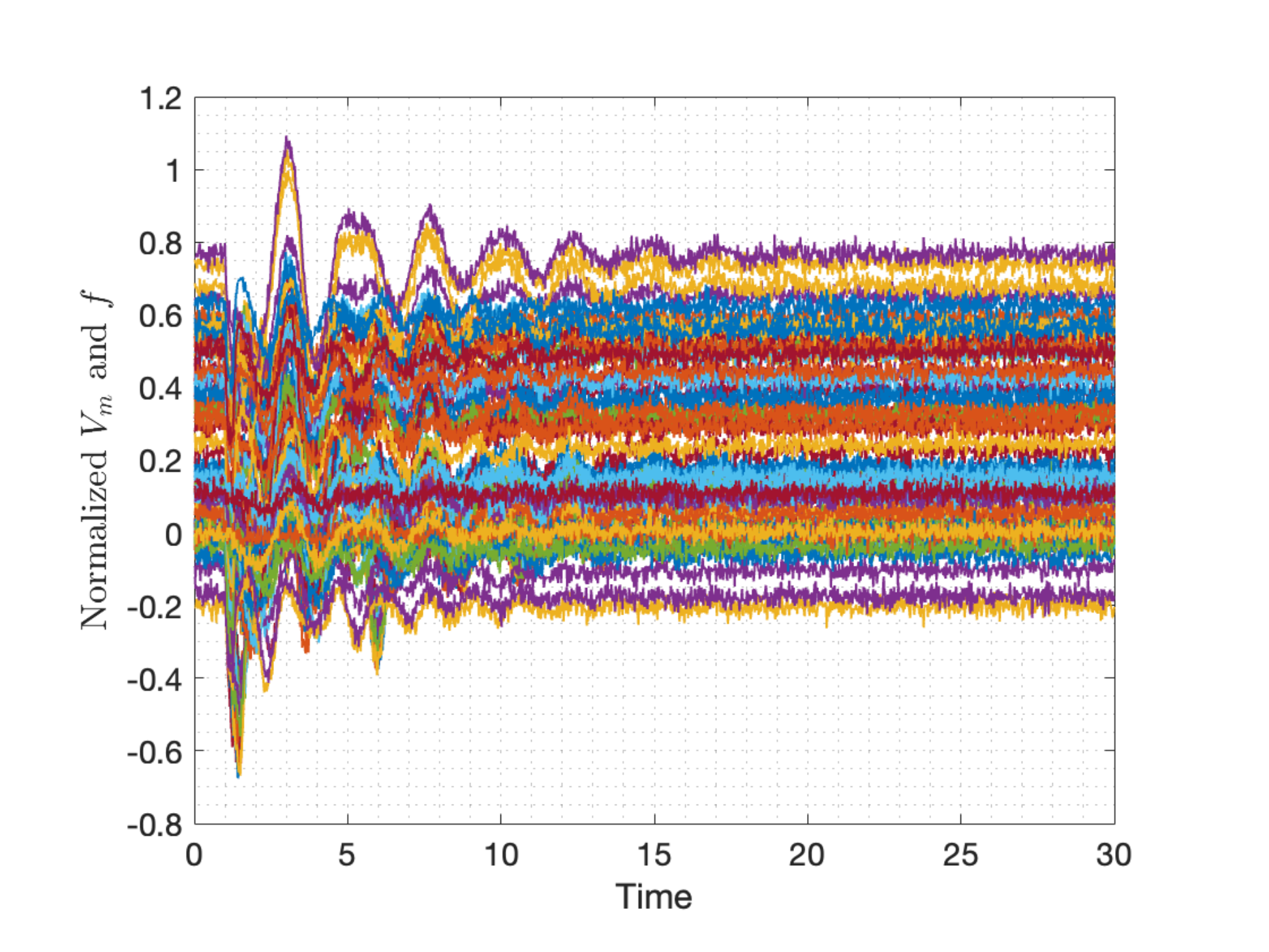}
\caption{Noisy time-series data of frequency and voltage obtained from PMUs.}\label{fig_power_data}
\end{figure}

The PMU measurements record data at a rate of 40-80 measurements per second. In this study, we fix its rate to 50 measurements per second and each PMU measures several system states such as frequency, rate of change of frequency, voltage magnitude, voltage angle, current flow through the connected branches are available. Furthermore, to capture the noise in the real-time PMU measurements, we manually add the noise to the measurements such that the signal-to-noise ration (SNR) of the resultant data is 85dB. To demonstrate the proposed robust recursive Koopman operator to understand the system evolution from noisy data in real-time, we perform the analysis considering the important states, frequency and voltage magnitude. 

Considering the initialization given in algorithm~\ref{algo}, a robust Koopman operator corresponding to the IEEE 68 bus system representing the underlying load changes is computed in real-time from streaming noisy PMU data. As the frequency and voltage magnitudes are considered at each bus, there is a total of 136 states and we use 150 Gaussian radial functions as observables, to compute the robust Koopman using RR-EDMD.
Fig.~\ref{fig_eig_68_bus} (a), (b) and (c) respectively show the eigenvalues of the learnt Robust Koopman operator after 100, 500 and 1000 measurements. Moreover, we can observe that the robust Koopman updated with every new measurement yields a stable representation (all eigenvalues lying inside the unit circle).
Essentially, applying the proposed RR-EDMD, the power system dynamics is learnt using measurements from a few seconds.
However, the Koopman operator learning is a continuous process as the PMU data streams in and the power system properties such as stability margins can be studied at any given time-point.  
Furthermore, we compared the computational time of RR-EDMD against that of EDMD, shown in Fig.~\ref{fig_EDMD_REDMD_comp_time_comparison_68_bus}.
It can be seen that the proposed RR-EDMD outperforms the standard EDMD method.

\begin{figure}[htp!]
\centering
\subfigure[]{\includegraphics[scale=.4]{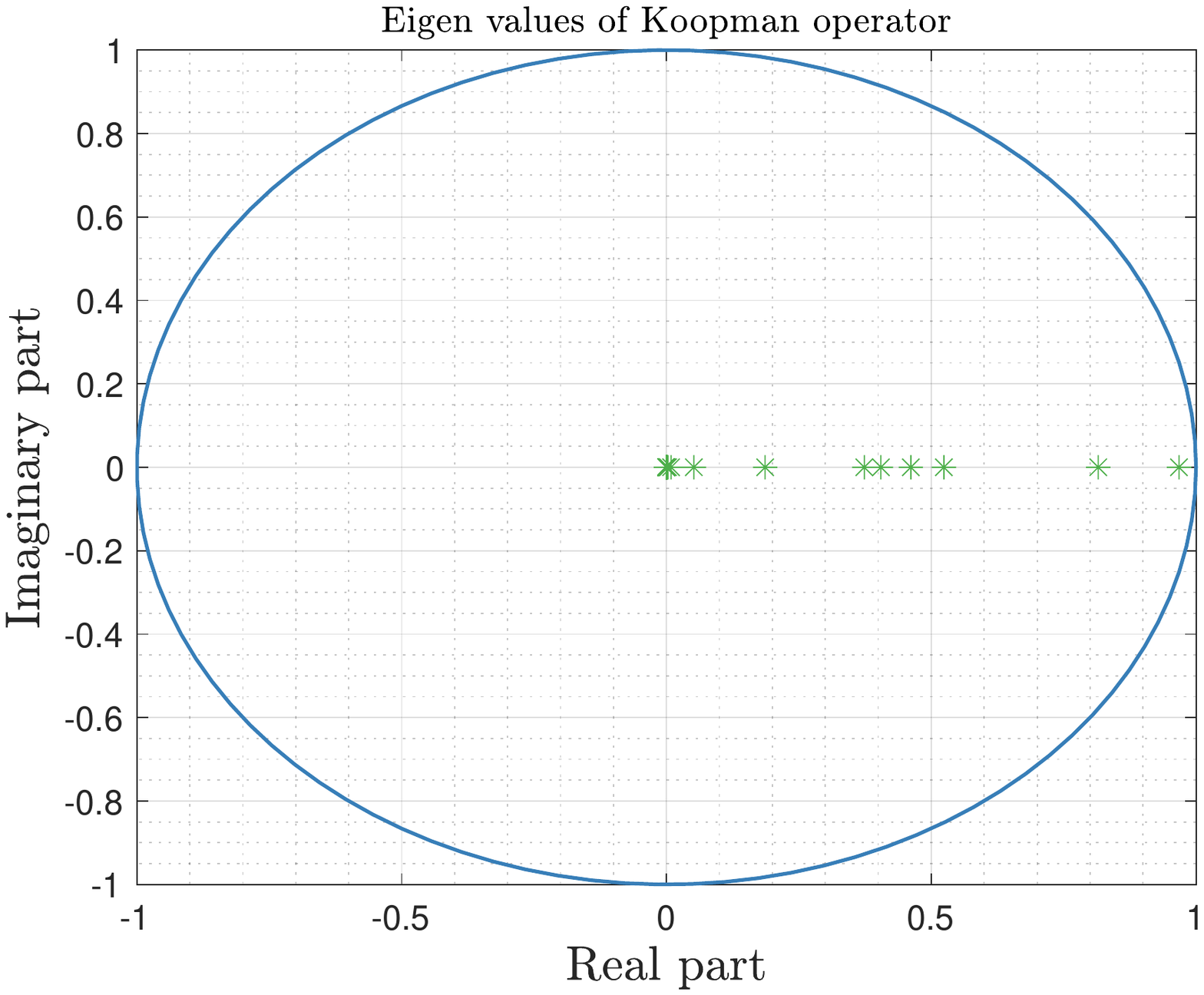}}
\subfigure[]{\includegraphics[scale=.4]{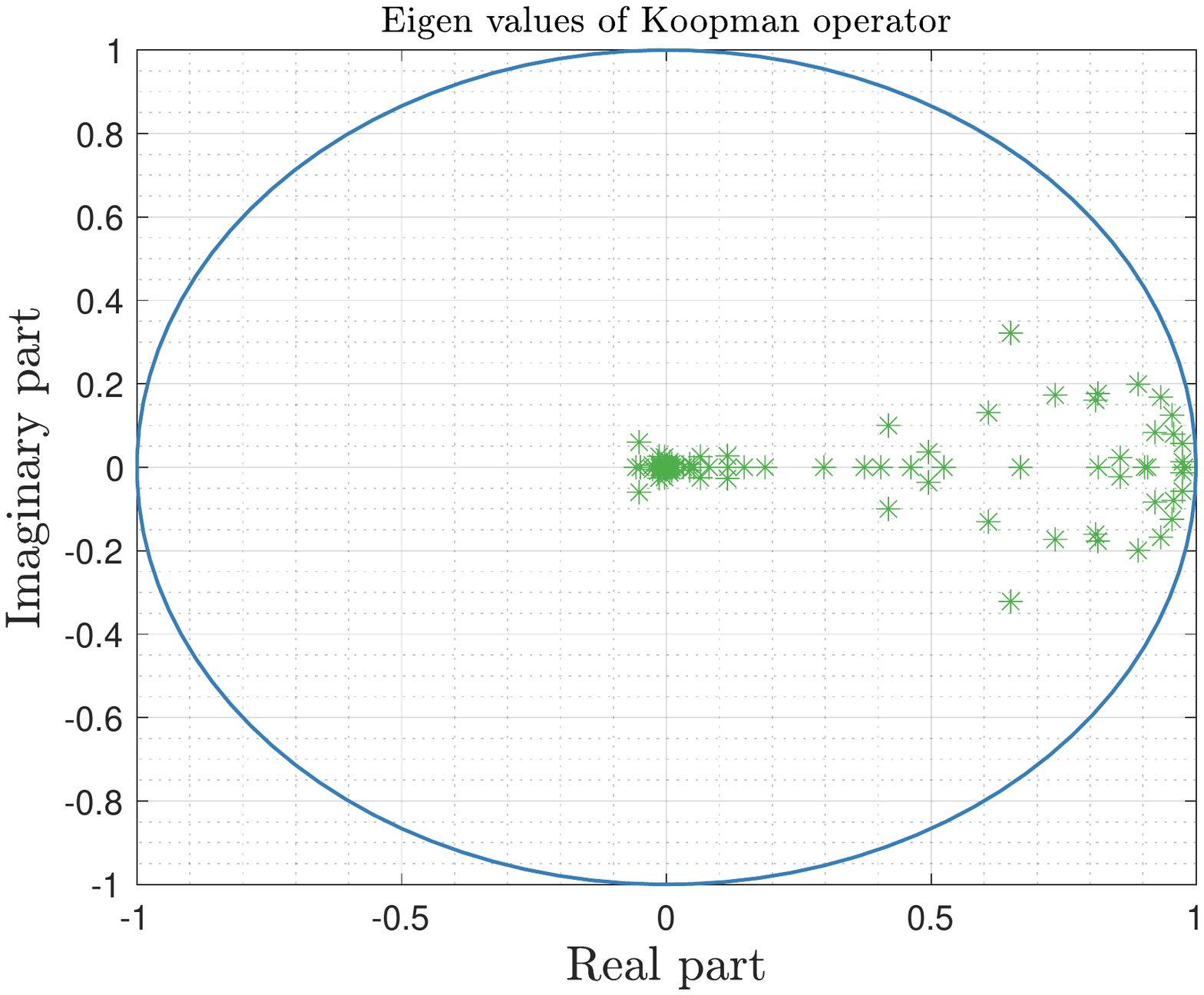}}
\subfigure[]{\includegraphics[scale=.4]{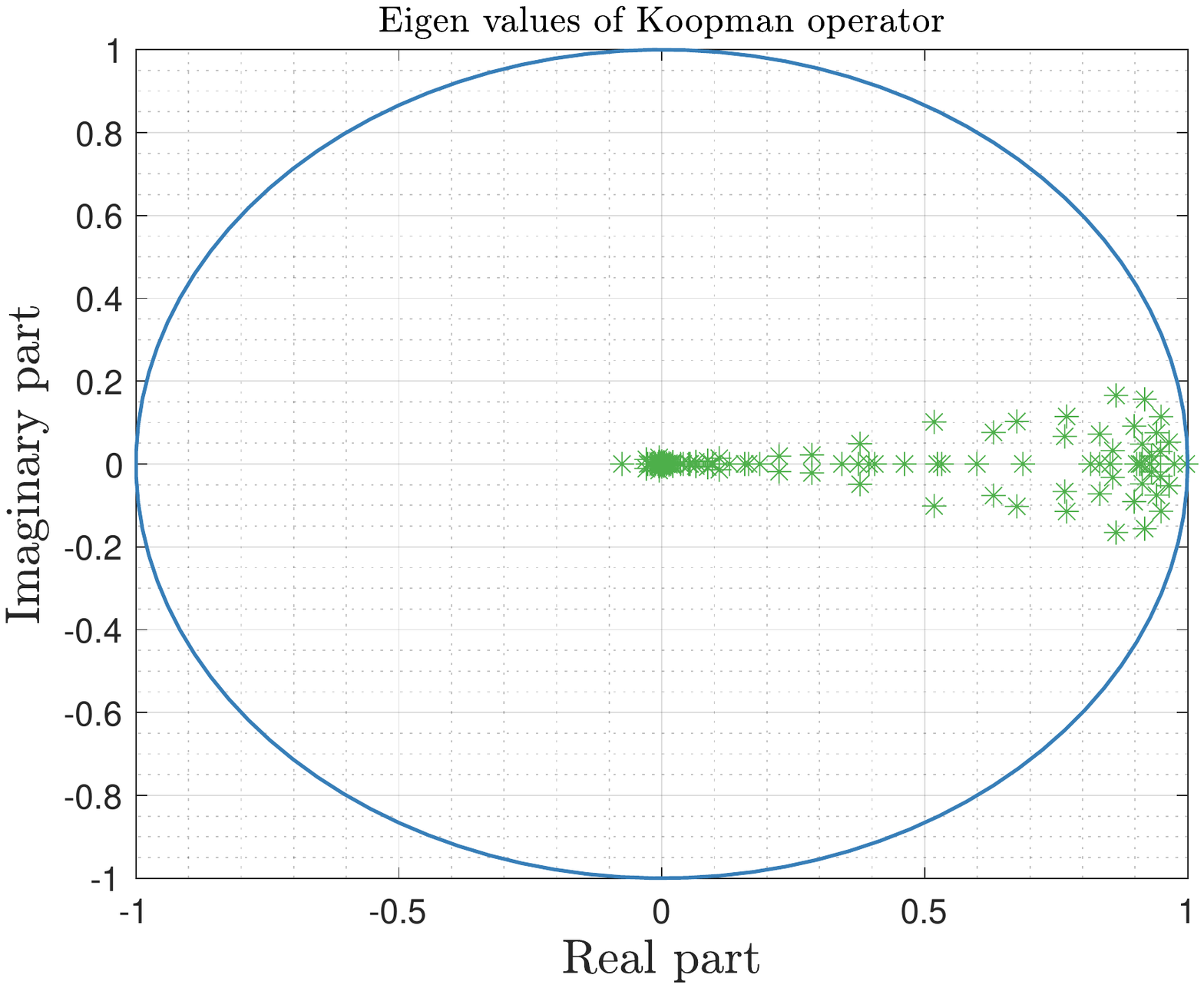}}
\caption{Eigenvalues of the Koopman operator after (a) 100 iterates; (b) 500 iterates; (c) 1000 iterates.}\label{fig_eig_68_bus}
\end{figure}

\begin{figure}[htp!]
\centering
\includegraphics[scale=.4]{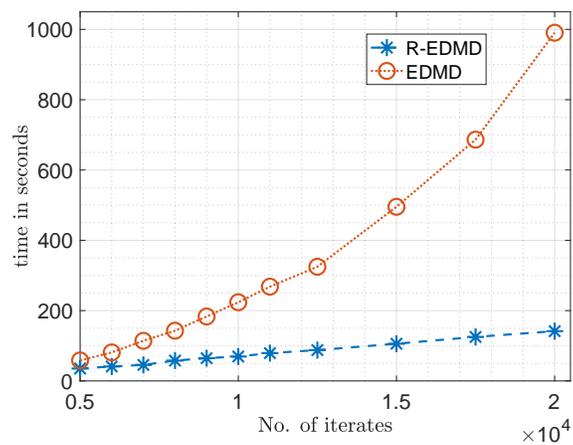}
\caption{Comparison of computation time for recursive learning using normal EDMD and RR-EDMD.}\label{fig_EDMD_REDMD_comp_time_comparison_68_bus}
\end{figure}

\subsection{Scalability of RR-EDMD with Dimension of the Underlying System}

In the previous two examples, we compared the computation times of the proposed RR-EDMD method and the existing EDMD method and found that RR-EDMD outperforms EDMD.
In this subsection, we analyze how the scalability of the RR-EDMD algorithm as the number of states in the underlying system increases.
In particular, we considered a ring network (Fig.~\ref{fig_scalability_van_der_pol}(a)) of Van der Pol oscillators and recorded the computation times of the Koopman operator using the RR-EDMD algorithm as the number of oscillators in the system increases.
The equation of motion of the $i$th oscillator is given by
\[\ddot{x}_i(t)=\mu (1-x_i^2(t)) - x_i(t) + \mathcal{L}_ix(t) + \sigma \xi_i(t),\]
where $x_i(t)$ is the position variable of the $i$th oscillator, $x(t)=[x_1(t), \cdots , x_n(t)]^\top$, $\mathcal{L}_i$ is the $i$th row of the network Laplacian, $\mu\geq 0$ is the damping constant, $\sigma>0$ is a constant and $\xi_i(t)\in\mathbb{R}^2$ is an i.i.d. Gaussian noise of zero mean and unit variance.
We varied the number of oscillators from 50 to 500, so that the number of states of the system varied from 100 to 1000.
However, it is to be noted that the Koopman operator is computed in the space of the Koopman observables.
In this example, we used 15 radial Gaussian basis functions per oscillator, so that the size of the Koopman operator varied from $750 \times 750$ to $7500 \times 7500$.
For simulation purposes, we used the same $\mu=0.8$ and $\sigma=0.2$ for all the oscillators.

\begin{figure}[htp!]
\centering
\subfigure[]{\includegraphics[scale=.65]{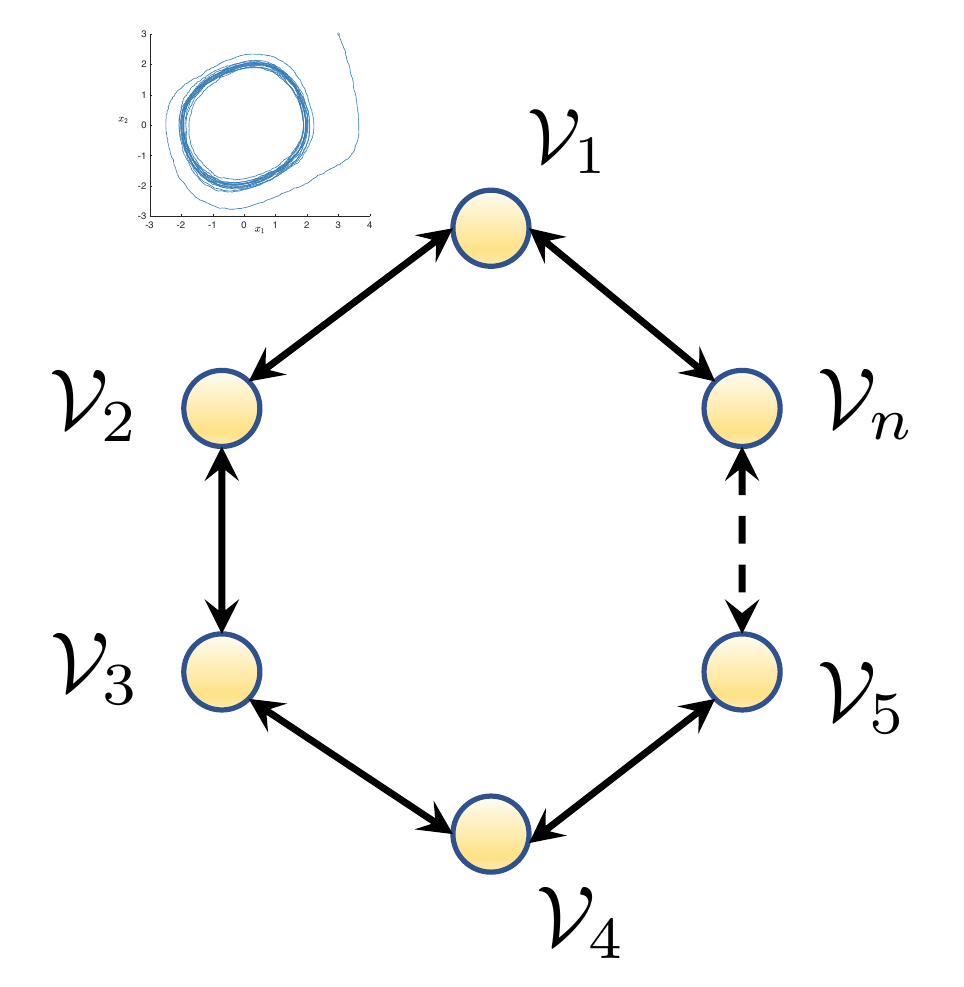}}
\subfigure[]{\includegraphics[scale=.35]{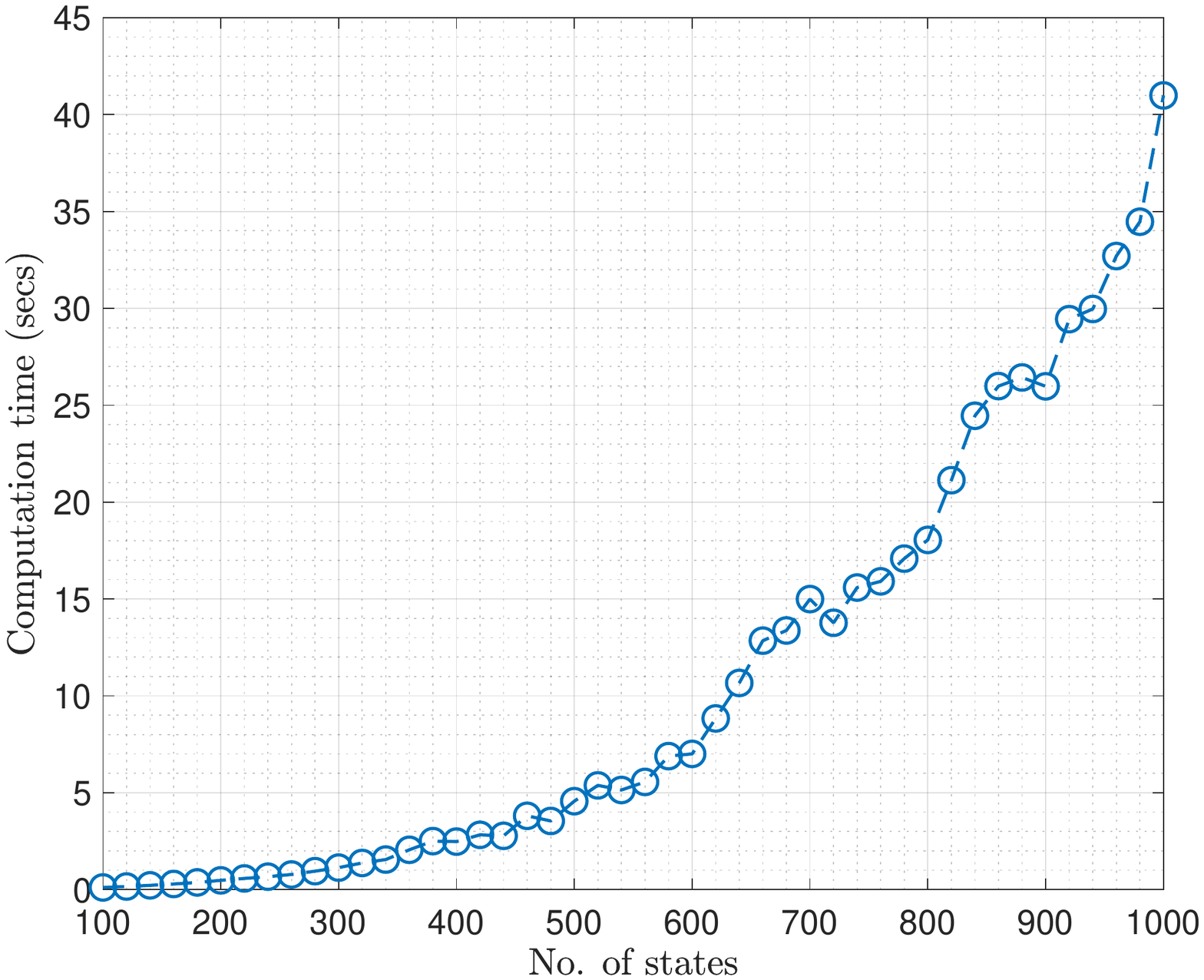}}
\caption{(a) Network of Van der Pol oscillators. (b) Computation time for recursive learning using RR-EDMD, as the number of states in the system increases.}\label{fig_scalability_van_der_pol}
\end{figure}

Fig.~\ref{fig_scalability_van_der_pol}(b) shows how the computation time varies with the number of states.
On the face of it, it seems that the computation time varies almost quadratically with the number of states.
However, it should be noted as the number of states increases, so does the number of dictionary functions, leading to the quadratic nature of the plot. 
However, if the number of dictionary functions remains constant, the computation time varies linearly with the number of states (Fig. \ref{fig_EDMD_REDMD_comp_time_comparison_van_der_pol}).
Since the RR-EDMD algorithm scales almost linearly with the dimension of the system, it can be used for the identification of large-dimensional systems in real time as well.



\section{Conclusions}\label{sec_conclusions}
In this paper, we address the important problem of learning the dynamics of a general dynamical system, from noisy measurements in real time.
In particular, we use the Koopman operator framework for data-driven learning, so that one obtains a linear system representation of the underlying dynamical system.
We resort to Robust Koopman operator estimation to mitigate the effect of measurement noise and we propose an iterative algorithm for recursively learning the Robust Koopman operator from streaming data.
We show that the proposed framework is substantially faster than the existing EDMD algorithm, thus making it practical to learn the dynamics in an online fashion and in real time.
We also demonstrate the efficiency of the proposed algorithm by applying it to identify the Van der Pol oscillator, the IEEE 68 bus system and a ring network of Van der Pol oscillators.

\section*{Acknowledgements}
This work was supported by the U.S. Department of Energy (DOE) Office of Science, Office of Advanced Scientific Computing Research (ASCR) as part of the Multifaceted Mathematics for Rare, Extreme Events in Complex Energy and Environment Systems (MACSER) project.
Pacific Northwest National Laboratory is operated by Battelle for the DOE under Contract DE-AC05-76RL01830.

\bibliography{subhrajit_online_identification}

\begin{thebibliography}{10}
\expandafter\ifx\csname url\endcsname\relax
  \def\url#1{\texttt{#1}}\fi
\expandafter\ifx\csname urlprefix\endcsname\relax\def\urlprefix{URL }\fi
\expandafter\ifx\csname href\endcsname\relax
  \def\href#1#2{#2} \def\path#1{#1}\fi

\bibitem{principia_newton}
I.~Newton, Principia Mathematica, 1687.

\bibitem{Lasota}
A.~Lasota, M.~C. Mackey, Chaos, Fractals, and Noise: Stochastic Aspects of
  Dynamics, Springer-Verlag, New York, 1994.

\bibitem{mezic_koopmanism}
M.~Budisic, R.~Mohr, I.~Mezic, Applied koopmanism, Chaos 22 (2012) 047510--32.

\bibitem{EDMD_williams}
M.~O. Williams, I.~G. Kevrekidis, C.~W. Rowley, A data--driven approximation of
  the koopman operator: Extending dynamic mode decomposition, Journal of
  Nonlinear Science 25~(6) (2015) 1307--1346.

\bibitem{mezic_koopman_stability}
A.~Mauroy, I.~Mezi{\'c}, A spectral operator-theoretic framework for global
  stability, in: Proc. of IEEE Conference of Decision and Control, Florence,
  Italy, 2013.

\bibitem{sinha_robust_dmd_journal}
S.~Sinha, B.~Huang, U.~Vaidya, {On robust computation of Koopman operator and
  prediction in random dynamical systems}, Journal of Nonlinear Science 30~(5)
  (2020) 2057--2090.

\bibitem{sinha_equivariant_ifac}
S.~Sinha, S.~P. Nandanoori, E.~Yeung, {Koopman operator methods for global
  phase space exploration of equivariant dynamical systems}, IFAC-PapersOnLine
  53~(2) (2020) 1150--1155.

\bibitem{nandanoori2020data}
S.~P. Nandanoori, S.~Sinha, E.~Yeung, Data-driven operator theoretic methods
  for global phase space learning, in: 2020 American Control Conference (ACC),
  IEEE, 2020, pp. 4551--4557.

\bibitem{nandanoori2021data}
S.~P. Nandanoori, S.~Sinha, E.~Yeung, Data-driven operator theoretic methods
  for phase space learning and analysis, arXiv e-prints (2021) arXiv--2106.

\bibitem{sinha_data_driven_control_arxiv}
S.~Sinha, S.~P. Nandanoori, J.~Drgona, D.~Vrabie, Data-driven stabilization of
  discrete-time control-affine nonlinear systems: A koopman operator approach,
  accepted for publication in ECC, arXiv preprint arXiv:2203.14114 (2022).

\bibitem{huang2018feedback}
B.~Huang, X.~Ma, U.~Vaidya, Feedback stabilization using koopman operator, in:
  2018 IEEE Conference on Decision and Control (CDC), IEEE, 2018, pp.
  6434--6439.

\bibitem{sinha_sparse_koopman_acc}
S.~Sinha, U.~Vaidya, E.~Yeung, On computation of koopman operator from sparse
  data, in: 2019 American Control Conference (ACC), IEEE, 2019, pp. 5519--5524.

\bibitem{sootla2017pulse}
A.~Sootla, D.~Ernst, Pulse-based control using koopman operator under
  parametric uncertainty, IEEE Transactions on Automatic Control 63~(3) (2017)
  791--796.

\bibitem{harrison2021stability}
J.~Harrison, E.~Yeung, Stability analysis of parameter varying genetic toggle
  switches using koopman operators, Mathematics 9~(23) (2021) 3133.

\bibitem{eisenhower2010decomposing}
B.~Eisenhower, T.~Maile, M.~Fischer, I.~Mezic, Decomposing building system data
  for model validation and analysis using the koopman operator, in: Proceedings
  of the National IBPSAUSA Conference, New York, USA, 2010.

\bibitem{korda_mezic_predictor}
M.~Korda, I.~Mezi{\'c}, Linear predictors for nonlinear dynamical systems:
  Koopman operator meets model predictive control, arXiv preprint
  arXiv:1611.03537 (2016).

\bibitem{slawinska2019quantum}
J.~Slawinska, A.~Ourmazd, D.~Giannakis, A quantum mechanical approach for data
  assimilation in climate dynamics, in: International Conference on Machine
  Learning Workshop on, 2019.

\bibitem{bruder2019modeling}
D.~Bruder, B.~Gillespie, C.~D. Remy, R.~Vasudevan, Modeling and control of soft
  robots using the koopman operator and model predictive control, arXiv
  preprint arXiv:1902.02827 (2019).

\bibitem{abraham2019active}
I.~Abraham, T.~D. Murphey, Active learning of dynamics for data-driven control
  using koopman operators, IEEE Transactions on Robotics 35~(5) (2019)
  1071--1083.

\bibitem{marrouch2020data}
N.~Marrouch, J.~Slawinska, D.~Giannakis, H.~L. Read, Data-driven koopman
  operator approach for computational neuroscience, Annals of Mathematics and
  Artificial Intelligence 88~(11) (2020) 1155--1173.

\bibitem{susuki2016applied}
Y.~Susuki, I.~Mezic, F.~Raak, T.~Hikihara, Applied koopman operator theory for
  power systems technology, Nonlinear Theory and Its Applications, IEICE 7~(4)
  (2016) 430--459.

\bibitem{sinha2019information}
S.~Sinha, P.~Sharma, U.~Vaidya, V.~Ajjarapu, On information transfer-based
  characterization of power system stability, IEEE Transactions on Power
  Systems 34~(5) (2019) 3804--3812.

\bibitem{nandanoori2022graph}
S.~P. Nandanoori, S.~Guan, S.~Kundu, S.~Pal, K.~Agarwal, Y.~Wu, S.~Choudhury,
  Graph neural network and koopman models for learning networked dynamics: A
  comparative study on power grid transients prediction, IEEE Access 10 (2022)
  32337--32349.

\bibitem{kropp2006system}
T.~Kropp, System threats and vulnerabilities [power system protection], IEEE
  Power and Energy Magazine 4~(2) (2006) 46--50.

\bibitem{sinha_online_learning_PES}
S.~Sinha, S.~P. Nandanoori, E.~Yeung, Data driven online learning of power
  system dynamics, in: 2020 IEEE Power \& Energy Society General Meeting
  (PESGM), IEEE, 2020, pp. 1--5.

\bibitem{robust_DMD_ACC}
S.~Sinha, B.~Huang, U.~Vaidya, {Robust Approximation of Koopman Operator and
  Prediction in Random Dynamical Systems}, in: 2018 Annual American Control
  Conference (ACC), IEEE, 2018, pp. 5491--5496.

\bibitem{nandanoori2020model}
S.~P. Nandanoori, S.~Kundu, S.~Pal, K.~Agarwal, S.~Choudhury, {Model-agnostic
  algorithm for real-time attack identification in power grid using Koopman
  modes}, in: 2020 IEEE International Conference on Communications, Control,
  and Computing Technologies for Smart Grids (SmartGridComm), IEEE, 2020, pp.
  1--6.

\end{thebibliography}

\end{document}